\documentclass[12pt]{article}
\usepackage[latin1]{inputenc}
\usepackage{cmap}
\usepackage{lmodern}
\usepackage[british]{babel}

\usepackage{amssymb, amsmath, amsthm}
\usepackage[a4paper,top=25mm,bottom=25mm,left=25mm,right=25mm]{geometry}
\usepackage{etex}
\usepackage{ragged2e}

\usepackage{authblk} 
\usepackage{pifont}
\usepackage{graphicx}
\usepackage[usenames,dvipsnames,svgnames,table]{xcolor}
\usepackage[figuresright]{rotating}
\usepackage{xtab} 
\usepackage{longtable} 
\usepackage{multirow}
\usepackage{footnote}
\usepackage[stable]{footmisc}
\usepackage{chngpage} 
\usepackage{pdflscape} 
\usepackage{tocbibind} 

\usepackage{pgfplots}
\pgfplotsset{compat=1.14}
\usepackage{setspace}

\usepackage{array}
\newcolumntype{K}[1]{>{\centering\arraybackslash$}p{#1}<{$}}

\makesavenoteenv{tabular}
\usepackage{tabularx}
\usepackage{booktabs}
\usepackage{threeparttable}
\usepackage[referable]{threeparttablex} 
\newcolumntype{R}{>{\raggedleft\arraybackslash}X}
\newcolumntype{L}{>{\raggedright\arraybackslash}X}
\newcolumntype{C}{>{\centering\arraybackslash}X}
\newcolumntype{A}{>{\columncolor{gray!25}}C}
\newcolumntype{a}{>{\columncolor{gray!25}}c}

\newlength{\tablen}

\usepackage{dcolumn} 
\newcolumntype{.}{D{.}{.}{-1}}

\usepackage{tikz}
\usetikzlibrary{arrows}
\usepackage[semicolon]{natbib}
\usepackage[hyphens]{url}
\usepackage{hyperref} 
\hypersetup{
  colorlinks   = true,    
  urlcolor     = blue,    
  linkcolor    = blue,    
  citecolor    = red      
}
\usepackage{microtype}
\usepackage[justification=centerfirst]{caption} 

\usepackage[labelformat=simple]{subcaption}

\DeclareCaptionLabelFormat{parenthesis}{(#2)}
\captionsetup[subfigure]{labelformat=parenthesis,font+=small,list=false}
\makeatletter
\renewcommand\p@subfigure{\arabic{figure}.}
\makeatother

\DeclareCaptionLabelFormat{parenthesis}{(#2)}
\captionsetup[subtable]{labelformat=parenthesis,font+=small,list=false}
\makeatletter
\renewcommand\p@subtable{\arabic{table}.}
\makeatother

\usepackage{enumitem}

\setlist[itemize]{leftmargin=2.5\parindent}
\setlist[enumerate]{leftmargin=2.5\parindent}

\theoremstyle{plain}

\newtheorem{corollary}{Corollary}[section]

\newtheorem{proposition}{Proposition}[section]
\newtheorem{theorem}{Theorem}[section]

\theoremstyle{definition}
\newtheorem{axiom}{Axiom}[section]

\newtheorem{definition}{Definition}[section]
\newtheorem{example}{Example}[section]

\theoremstyle{remark}


\def\keywords{\vspace{.5em} 
{\noindent \textit{Keywords}:\,}}

\def\JEL{\vspace{.5em} 
{\noindent \textbf{\emph{JEL} classification number}:\,}}

\def\AMS{\vspace{.5em} 
{\noindent \textbf{\emph{MSC} class}:\,}}

\author{\href{https://sites.google.com/site/laszlocsato87}{L\'aszl\'o Csat\'o}\thanks{~e-mail: laszlo.csato@uni-corvinus.hu} }
\affil{Institute for Computer Science and Control, Hungarian Academy of Sciences (MTA SZTAKI) \\
Laboratory on Engineering and Management Intelligence, Research Group of Operations Research and Decision Systems}
\affil{Corvinus University of Budapest (BCE) \\
Department of Operations Research and Actuarial Sciences}
\affil{Budapest, Hungary}
\title{A characterization of the Logarithmic Least Squares Method}

\begin{document}

\maketitle

\begin{abstract}
We provide an axiomatic characterization of the Logarithmic Least Squares Method (sometimes called row geometric mean), used for deriving a preference vector from a pairwise comparison matrix.
This procedure is shown to be the only one satisfying two properties, correctness in the consistent case, which requires the reproduction of the inducing vector for any consistent matrix, and invariance to a specific transformation on a triad, that is, the weight vector is not influenced by an arbitrary multiplication of matrix elements along a 3-cycle by a positive scalar.

\keywords{Decision analysis; pairwise comparisons; geometric mean; axiomatic approach; characterization}

\JEL{C44}

\AMS{90B50, 91B08}
\end{abstract}

\section{Introduction}

Pairwise comparisons are a fundamental tool in many decision-analysis methods such as the Analytic Hierarchy Process (AHP) \citep{Saaty1980}.
However, in real-world applications the judgements of decision-makers may be inconsistent: for example, alternative $A$ is two times better than alternative $B$, alternative $B$ is three times better than alternative $C$, but alternative $A$ is not six times better than alternative $C$. Inconsistency can also be an inherent feature of the data, for example, on the field of sports \citep{Csato2013a, BozokiCsatoTemesi2016, ChaoKouLiPeng2018}.

Therefore, a lot of methods have been suggested in the literature for deriving preference values from pairwise comparison matrices.
In such cases, it seems to be fruitful to follow an axiomatic approach: the introduction and justification of reasonable properties may help to narrow the set of appropriate weighting methods and reveal some crucial features of them.
The most important contribution of similar analyses can be an \emph{axiomatic characterization}, when a set of properties \emph{uniquely} determine a preference vector.

Characterization of different methods is a standard tool in social choice theory, for instance, in the case of the Shapley value in game theory (see e.g. \citet{Shapley1953, vandenBrinkPinter2015}), or for the Hirsch index in scientometrics (see e.g. \citet{Woeginger2008, BouyssouMarchand2014}).
This approach has been applied recently for inconsistency indices of pairwise comparison matrices \citep{Csato2018e, Csato2018a}.

\citet{Fichtner1984}, presumably the first work on the axiomatizations of weighting methods,  characterized the Logarithmic Least Squares Method \citep{Rabinowitz1976, CrawfordWilliams1980, CrawfordWilliams1985, DeGraan1980} by using four requirements, correctness in the consistent case, comparison order invariance, smoothness, and power invariance.
\citet{Fichtner1986} showed that substituting power invariance with rank preservation leads to the Eigenvector Method suggested by \citet{Saaty1980}.

From this set of axioms, correctness in the consistent case and comparison order invariance are almost impossible to debate. However, according to \citet{Bryson1995}, there exists a goal-programming method satisfying power invariance and a slightly modified version of smoothness, which possesses the additional property that the presence of a single outlier cannot prevent the identification of the correct priority vector. While \citet{Fichtner1984} introduces smoothness in terms of differentiable functions and continuous derivatives, the interpretation of \citet{Bryson1995} -- a small change in the input does not lead to a large change in the output -- seems to be more natural for us. \citet{CookKress1988} approached the problem by focusing on distance measures in order to get another goal programming method on an axiomatic basis.

Smoothness and power invariance can be entirely left out from the characterization of the Logarithmic Least Squares Method. \citet{BarzilaiCookGolany1987} exchange them for a consistency-like axiom by considering two procedures: (1) some pairwise comparison matrices are aggregated to one matrix and the solution is computed for this matrix; (2) the priorities are derived separately for each matrix and combined by the geometric mean; which are required to result in the same preference vector. We think it is not a simple condition immediately to adopt.
\citet{Barzilai1997} managed to replace this axiom and comparison order invariance with essentially demanding that each individual weight is a function of the entries in the corresponding row of the pairwise comparison matrix only. Joining to \citet{Dijkstra2013}, we are also somewhat uncomfortable with this premise.

To summarize, the problem of weight derivation does not seem to be finally settled by the axiomatic approach. Consequently, it may not be futile to provide another characterization of the Logarithmic Least Squares Method, which hopefully highlights some new aspects of the procedure.
This is the main aim of the current paper.

Presenting an axiomatic characterization does not mean that we accept all properties involved as wholly justified and unquestionable or we reject the axioms proposed by previous works.
To consider an example from a related topic, although most axiomatic analysis of inconsistency \citep{BrunelliFedrizzi2011, Brunelli2016a, Brunelli2017, BrunelliFedrizzi2015, BrunelliFedrizzi2018, CavalloDApuzzo2012, KoczkodajSzwarc2014, KoczkodajSzybowski2015, KoczkodajUrban2018} look for well-motivated axioms that should be satisfied by any reasonable measure, \citet{Csato2018a} does not deal with the appropriate motivation of his axioms, the issue to be investigated is only how they can narrow the set of inconsistency indices.

This paper strictly follows the latter direction, therefore, we only say that \emph{if} one agrees with our axioms, then the geometric mean remains the only choice.
The importance of an axiomatic characterization does not depend on a convincing explanation for the desirability of the properties suggested.
For example, Arrow's impossibility theorem \citep{Arrow1950} can also be interpreted as an axiomatization of the dictatorial rule by universality, Pareto efficiency, and independence of irrelevant alternatives, however, it does not mean that the dictatorial rule is good.


The study is structured as follows. Section~\ref{Sec2} presents some definitions on the field of pairwise comparison matrices. Two properties of weighting methods are defined in Section~\ref{Sec3}, which will provide the characterization of the Logarithmic Least Squares Method in Section~\ref{Sec4}. Section~\ref{Sec5} summarizes our findings.

\section{Preliminaries} \label{Sec2}

Assume that $n$ alternatives should be measured with respect to a given criterion on the basis of pairwise comparisons such that $a_{i,j}$ is an assessment of the relative importance of alternative $i$ with respect to alternative $j$.

Let $\mathbb{R}^{n}_+$ and $\mathbb{R}^{n \times n}_+$ denote the set of positive (with all elements greater than zero) vectors of size $n$ and matrices of size $n \times n$, respectively.

\begin{definition} \label{Def21}
\emph{Pairwise comparison matrix}:
Matrix $\mathbf{A} = \left[ a_{i,j} \right] \in \mathbb{R}^{n \times n}_+$ is a \emph{pairwise comparison matrix} if $a_{j,i} = 1/a_{i,j}$ for all $1 \leq i,j \leq n$.
\end{definition}

Any pairwise comparison matrix is well-defined by its elements above the diagonal since we discuss only multiplicative pairwise comparison matrices with the reciprocal property throughout the paper.
Let $\mathcal{A}^{n \times n}$ be the set of pairwise comparison matrices of size $n \times n$.


A pairwise comparison matrix $\mathbf{A} \in \mathcal{A}^{n \times n}$ is called \emph{consistent} if $a_{i,k} = a_{i,j} a_{j,k}$ for all $1 \leq i,j,k \leq n$. Otherwise, it is said to be \emph{inconsistent}. Any pairwise comparison matrix is allowed to be inconsistent unless its consistency is explicitly stated.

\begin{definition} \label{Def22}
\emph{Weight vector}:
Vector $\mathbf{w}  = \left[ w_{i} \right] \in \mathbb{R}^n_+$ is a \emph{weight vector} if $\sum_{i=1}^n w_{i} = 1$.
\end{definition}

Let $\mathcal{R}^{n}$ be the set of weight vectors of size $n$.

\begin{definition} \label{Def23}
\emph{Weighting method}:
Function $f: \mathcal{A}^{n \times n} \to \mathcal{R}^{n}$ is a \emph{weighting method}.
\end{definition}

A weighting method associates a weight vector to any pairwise comparison matrix $\mathbf{A}$ such that $f_i(\mathbf{A})$ is the weight of alternative $i$.

Several weighting methods have been suggested in the literature, see \citet{ChooWedley2004} for an overview. This paper discusses two of them, which are among the most popular.

\begin{definition} \label{Def24}
\emph{Eigenvector Method} ($EM$) \citep{Saaty1980}:
The \emph{Eigenvector Method} is the function $\mathbf{A} \to \mathbf{w}^{EM} (\mathbf{A})$ such that
\[
\mathbf{A} \mathbf{w}^{EM}(\mathbf{A}) = \lambda_{\max} \mathbf{w}^{EM}(\mathbf{A}) \qquad \text{and} \qquad \sum_{i=1}^n w_i^{EM} = 1,
\]
where $\lambda_{\max}$ denotes the maximal eigenvalue, also known as principal or Perron eigenvalue, of matrix $\mathbf{A}$.
\end{definition}

\begin{definition} \label{Def25}
\emph{Logarithmic Least Squares Method} ($LLSM$) \citep{CrawfordWilliams1980, CrawfordWilliams1985, DeGraan1980}: 
The \emph{Logarithmic Least Squares Method} is the function $\mathbf{A} \to \mathbf{w}^{LLSM} (\mathbf{A})$ such that the weight vector $\mathbf{w}^{LLSM} (\mathbf{A})$ is the optimal solution of the problem:
\begin{equation} \label{Eq_LLSM}
\min_{\mathbf{w} \in \mathcal{R}^n} \sum_{i=1}^n \sum_{j=1}^n \left[ \log a_{i,j} - \log \left( \frac{w_i}{w_j} \right) \right]^2.
\end{equation}
\end{definition}

$LLSM$ is sometimes called (row) \emph{geometric mean} because the solution of \eqref{Eq_LLSM} can be computed as
\begin{equation} \label{Form_LLSM}
w_i^{LLSM}(\mathbf{A}) = \frac{\prod_{j=1}^n a_{i,j}^{1/n}}{\sum_{k=1}^n \prod_{j=1}^n a_{k,j}^{1/n}}.
\end{equation}


\section{Axioms} \label{Sec3}

In this section, two properties of weighting methods will be discussed.

\begin{axiom} \label{Axiom1}
\emph{Correctness} ($CO$):
Let $\mathbf{A} \in \mathcal{A}^{n \times n}$ be a consistent pairwise comparison matrix.
Weighting method $f: \mathcal{A}^{n \times n} \to \mathcal{R}^n$ is \emph{correct} if $f_i(\mathbf{A}) / f_j(\mathbf{A}) = a_{i,j}$ for all $1 \leq i,j \leq n$.
\end{axiom}

$CO$ requires the reproduction of the inducing vector for any consistent pairwise comparison matrix. It was introduced by \citet{Fichtner1984} under the name \emph{correct result in the consistent case} and was used by \citet{Fichtner1986}, \citet{BarzilaiCookGolany1987} and \citet{Barzilai1997}, among others.

\begin{proposition} \label{Prop31}
The Eigenvector Method and the Logarithmic Least Squares Method satisfy correctness.
\end{proposition}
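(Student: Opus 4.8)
The plan is to exploit the structural characterization of consistent matrices: a pairwise comparison matrix $\mathbf{A} \in \mathcal{A}^{n \times n}$ is consistent if and only if there exists a positive vector $\mathbf{v} \in \mathbb{R}^n_+$ such that $a_{i,j} = v_i / v_j$ for all $1 \leq i,j \leq n$. The \emph{if} direction is immediate, since $a_{i,j} a_{j,k} = (v_i/v_j)(v_j/v_k) = v_i/v_k = a_{i,k}$. For the \emph{only if} direction I would fix the first column and set $v_i = a_{i,1}$; then reciprocity gives $v_i / v_j = a_{i,1} a_{1,j}$, and consistency collapses this to $a_{i,j}$. Once this representation is in hand, correctness amounts to showing that each method returns a weight vector proportional to $\mathbf{v}$, because then $f_i(\mathbf{A})/f_j(\mathbf{A}) = v_i/v_j = a_{i,j}$ and the normalization $\sum_i f_i(\mathbf{A}) = 1$ only fixes the scaling.

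For the Logarithmic Least Squares Method I would argue directly from the optimization problem~\eqref{Eq_LLSM}. Taking $\mathbf{w}$ to be $\mathbf{v}$ rescaled so that $\sum_i w_i = 1$, each summand becomes $\left[ \log(v_i/v_j) - \log(w_i/w_j) \right]^2 = 0$, so the objective attains its trivial lower bound $0$; as the minimizer is unique, $\mathbf{w}^{LLSM}(\mathbf{A})$ must be this normalized $\mathbf{v}$, whence $w_i^{LLSM}/w_j^{LLSM} = v_i/v_j = a_{i,j}$. Equivalently, substituting $a_{i,j} = v_i/v_j$ into the closed form~\eqref{Form_LLSM} gives $\prod_{j=1}^n a_{i,j}^{1/n} = v_i / \left( \prod_{j=1}^n v_j \right)^{1/n}$, and the common factor cancels in the normalization, leaving $w_i^{LLSM}(\mathbf{A}) = v_i / \sum_{k=1}^n v_k$.

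For the Eigenvector Method I would compute $(\mathbf{A}\mathbf{v})_i = \sum_{j=1}^n (v_i/v_j) v_j = n v_i$, so $\mathbf{v}$ is an eigenvector of $\mathbf{A}$ with eigenvalue $n$. The main obstacle is then spectral rather than computational: one must confirm that this eigenvalue $n$ is exactly the principal eigenvalue $\lambda_{\max}$ and that the associated eigenvector is unique up to scaling. I would settle this by invoking the Perron--Frobenius theorem for the positive matrix $\mathbf{A}$ -- which guarantees a simple dominant eigenvalue with an (up to scaling) unique positive eigenvector -- together with the standard fact that a reciprocal matrix satisfies $\lambda_{\max} \geq n$, with equality precisely in the consistent case. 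Since $\mathbf{v}$ is positive, it is the Perron eigenvector, so $\mathbf{w}^{EM}(\mathbf{A})$ is its normalization and $w_i^{EM}/w_j^{EM} = v_i/v_j = a_{i,j}$, as required.
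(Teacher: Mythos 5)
The paper states Proposition~\ref{Prop31} without any proof at all---it is treated as a known fact, credited implicitly to the earlier axiomatic literature (\citealp{Fichtner1984} and its successors)---so there is no in-paper argument to compare against; your task was effectively to supply the missing standard proof, and you have done so correctly. Your structure is the natural one: the representation lemma (consistency iff $a_{i,j} = v_i/v_j$ for some positive $\mathbf{v}$, with $v_i = a_{i,1}$ witnessing the nontrivial direction) reduces correctness to showing each method returns the normalization of $\mathbf{v}$, and both verifications go through. Two minor points. First, in the direct optimization argument for $LLSM$ you assert that the minimizer of \eqref{Eq_LLSM} is unique; strictly this deserves a word (in the coordinates $x_i = \log w_i$ the objective is a convex quadratic whose minimizers form a one-dimensional family $x + c\mathbf{1}$, and the constraint $\sum_i w_i = 1$ selects a single representative), but your closed-form variant via \eqref{Form_LLSM} sidesteps the issue entirely, since substituting $a_{i,j} = v_i/v_j$ gives $\prod_j a_{i,j}^{1/n} = v_i / ( \prod_j v_j )^{1/n}$ and the common factor cancels, so the proof is complete on that route alone. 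Second, for $EM$ your appeal to the fact that a reciprocal matrix has $\lambda_{\max} \geq n$ with equality exactly in the consistent case is redundant and slightly circular in spirit: that equality characterization is itself a theorem of Saaty whose proof you would otherwise owe. It is not needed, because Perron--Frobenius already guarantees that the only eigenvalue of a positive matrix admitting a \emph{positive} eigenvector is the dominant eigenvalue; hence $\mathbf{A}\mathbf{v} = n\mathbf{v}$ with $\mathbf{v} > 0$ immediately forces $\lambda_{\max} = n$ and identifies $\mathbf{v}$ (up to scaling) as the Perron vector, which is exactly the conclusion you draw.
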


\begin{definition} \label{Def31}
\emph{$\alpha$-transformation on a triad}:
Let $\mathbf{A} \in \mathcal{A}^{n \times n}$ be a pairwise comparison matrix and $1 \leq i,j,k \leq n$ be three different alternatives. An \emph{$\alpha$-transformation on the triad} $(i,j,k)$ -- which is determined by the three alternatives $i$, $j$, and $k$ -- provides the pairwise comparison matrix $\hat{\mathbf{A}} \in \mathcal{A}^{n \times n}$ such that $\alpha > 0$, $\hat{a}_{i,j} = \alpha a_{i,j}$ ($\hat{a}_{j,i} = a_{j,i} / \alpha$), $\hat{a}_{j,k} = \alpha a_{j,k}$ ($\hat{a}_{k,j} = a_{k,j} / \alpha$), $\hat{a}_{k,i} = \alpha a_{k,i}$ ($\hat{a}_{i,k} = a_{i,k} / \alpha$) and $\hat{a}_{\ell, m} = a_{\ell, m}$ for all other elements.
\end{definition}

The transformation changes three elements of a pairwise comparison matrix along a 3-cycle. It can reproduce \emph{local} consistency: the choice $\alpha = \sqrt[3]{a_{i,k} / (a_{i,j} a_{j,k})}$ leads to $\hat{a}_{i,j} \hat{a}_{j,k} = \alpha^2 a_{i,j} a_{j,k} = a_{i,k} / \alpha = \hat{a}_{i,k}$.

Naturally, this process modifies all values of the triad, while maybe two of the comparisons are accurate and one contains all the inaccuracy. However, if no further information is available, then our assumption seems to be reasonable.

\begin{axiom} \label{Axiom2}
\emph{Invariance to $\alpha$-transformation on a triad} ($IT$):
Let $\mathbf{A}, \hat{\mathbf{A}} \in \mathcal{A}^{n \times n}$ be any two pairwise comparison matrices such that $\hat{\mathbf{A}}$ can be obtained from $\mathbf{A}$ through an $\alpha$-transformation on a triad.
Weighting method $f: \mathcal{A}^{n \times n} \to \mathcal{R}^n$ is \emph{invariant to $\alpha$-transformation on a triad} if $f(\mathbf{A}) = f ( \hat{\mathbf{A}} )$.
\end{axiom}

$IT$ means that the weights of the alternatives are not influenced by $\alpha$-transformations on triads. It has been inspired by the axiom \emph{independence of circuits} in \citet{Bouyssou1992}.

A motivation for invariance to $\alpha$-transformation on a triad can be the following. Consider a sports competition where player $i$ has defeated player $j$, player $j$ has defeated player $k$, while player $k$ has defeated player $i$, and suppose that the three wins are equivalent. Then the final ranking is not allowed to change if the margins of victories are modified by the same amount. In particular, the three results can be reversed ($j$ beats $i$, $k$ beats $j$, and $i$ beats $k$), or all comparisons can become a draw.
$IT$ is practically a generalization of this idea.

\begin{proposition} \label{Prop32}
The Eigenvector Method violates invariance to $\alpha$-transformation on a triad.
\end{proposition}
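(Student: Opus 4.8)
The plan is to refute the axiom by producing a single counterexample: one matrix $\mathbf{A}$ and one $\alpha$-transform $\hat{\mathbf{A}}$ for which $\mathbf{w}^{EM}(\mathbf{A}) \neq \mathbf{w}^{EM}(\hat{\mathbf{A}})$, since $IT$ is a universally quantified property and a single failing pair destroys it. The most economical route exploits Proposition~\ref{Prop31}: if $\mathbf{A}$ is \emph{consistent} and induced by a weight vector $\mathbf{w}$ (so $a_{i,j} = w_i/w_j$), then correctness forces $\mathbf{w}^{EM}(\mathbf{A})$ to be the normalisation of $\mathbf{w}$, and it is exactly $\mathbf{w}$ that I know explicitly. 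It then remains only to apply an $\alpha$-transformation with $\alpha \neq 1$ and argue that $\mathbf{w}$ is no longer the Perron eigenvector of $\hat{\mathbf{A}}$.

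The main obstacle, and the reason one cannot simply take the smallest case, is that the size $n = 3$ does \emph{not} furnish a counterexample. There the only triad is the entire matrix, and a direct computation gives $\hat{\mathbf{A}}\mathbf{w} = (1 + \alpha + 1/\alpha)\,\mathbf{w}$: the transformation rescales all three rows by the \emph{same} factor, so $\mathbf{w}$ survives as the Perron eigenvector and $EM$ turns out to be invariant (this persists for inconsistent $3\times 3$ matrices as well, which is worth flagging but not needed). Consequently any counterexample must have $n \geq 4$, where the alternatives lying outside the triad break this accidental balance.

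Thus I would work in $n = 4$, take a consistent $\mathbf{A}$ with $a_{i,j} = w_i/w_j$, and apply the $\alpha$-transformation to the triad $(1,2,3)$, leaving every comparison involving alternative $4$ untouched. Computing $\hat{\mathbf{A}}\mathbf{w}$ coordinatewise, the first three entries are each multiplied by $2 + \alpha + 1/\alpha$, while the fourth entry is multiplied by $4$, because the fourth row is unchanged and $\sum_{j} a_{4,j} w_j = \sum_{j} (w_4/w_j) w_j = 4 w_4$. Since $\alpha + 1/\alpha > 2$ for every $\alpha \neq 1$, these two scaling factors differ, so $\hat{\mathbf{A}}\mathbf{w}$ is not a scalar multiple of $\mathbf{w}$; hence $\mathbf{w}$ fails to be an eigenvector of $\hat{\mathbf{A}}$ at all, still less its Perron vector. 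This yields $\mathbf{w}^{EM}(\hat{\mathbf{A}}) \neq \mathbf{w} = \mathbf{w}^{EM}(\mathbf{A})$ and contradicts $IT$.

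The attractive feature of this approach is that it never requires solving for the eigenvector of $\hat{\mathbf{A}}$: the conclusion follows purely from the observation that the transform scales the three rows of the triad by one factor and the remaining row by another. A fully explicit instance is immediate, for example the all-ones matrix $\mathbf{A}$ on $\mathbf{w} = (1,1,1,1)$ with $\alpha = 2$, for which $\mathbf{w}^{EM}(\mathbf{A}) = (1/4,1/4,1/4,1/4)$ while $\hat{\mathbf{A}}(1,1,1,1)^\top = (3.5,\,3.5,\,3.5,\,4)^\top$ is visibly not proportional to $(1,1,1,1)^\top$; if desired one can additionally display the genuinely non-uniform Perron vector of $\hat{\mathbf{A}}$ to make the violation concrete.
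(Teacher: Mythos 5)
Your proposal is correct, but it takes a genuinely different route from the paper's. The paper refutes $IT$ for $EM$ with one concrete numerical instance: an \emph{inconsistent} $4\times 4$ matrix (all upper-diagonal entries $1$ except $a_{1,4}=8$), an $\alpha$-transformation on the triad $(1,2,4)$ with $\alpha=2$, and a numerical comparison of the two Perron vectors to four decimal places, so the verification is delegated to an eigenvector computation. You instead start from a \emph{consistent} matrix, where $\mathbf{w}^{EM}(\mathbf{A})$ is known in closed form (the normalisation of the inducing $\mathbf{w}$, via Proposition~\ref{Prop31} or directly from $\mathbf{A}\mathbf{w}=n\mathbf{w}$), and argue structurally: after the transformation on the triad $(1,2,3)$ each triad row of $\hat{\mathbf{A}}\mathbf{w}$ is scaled by $2+\alpha+1/\alpha$ (each such row picks up exactly one factor $\alpha$ and one factor $1/\alpha$) while the untouched fourth row is scaled by $4$, and since $\alpha+1/\alpha>2$ for $\alpha\neq 1$, the vector $\mathbf{w}$ is no longer an eigenvector of $\hat{\mathbf{A}}$ at all, hence certainly not its Perron vector, which forces $\mathbf{w}^{EM}(\hat{\mathbf{A}})\neq\mathbf{w}^{EM}(\mathbf{A})$. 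This buys a counterexample requiring no numerics that works for every consistent $\mathbf{A}$, every $n\geq 4$, and every $\alpha\neq 1$; moreover your $n=3$ analysis ($\hat{\mathbf{A}}\mathbf{w}=(1+\alpha+1/\alpha)\mathbf{w}$, and a positive eigenvector of a positive matrix must be the Perron vector) explains \emph{why} any counterexample needs $n\geq 4$, recovering the content of Corollary~\ref{Cor31} as a by-product. What the paper's approach buys in exchange is an immediately checkable concrete instance displaying the actual perturbed weights. One arithmetic slip to fix: in your explicit all-ones example with $\alpha=2$, the triad coordinates of $\hat{\mathbf{A}}(1,1,1,1)^\top$ equal $1+2+1/2+1=4.5$, which is $2+\alpha+1/\alpha$ by your own general formula, not $3.5$ (you dropped one of the unit terms); the conclusion is unaffected, since $4.5\neq 4$ still shows non-proportionality.
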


\begin{proof}
Consider the following pairwise comparison matrices:
\[
\mathbf{A} = \left[
\begin{array}{K{1.5em} K{1.5em} K{1.5em} K{1.5em}}
    1     & 1     & 1     & 8 \\
    1     & 1     & 1     & 1 \\
    1     & 1     & 1     & 1 \\
    1/8	  & 1     & 1     & 1 \\
\end{array}
\right] \qquad \text{and} \qquad
\hat{\mathbf{A}} = \left[
\begin{array}{K{1.5em} K{1.5em} K{1.5em} K{1.5em}}
    1     & 2     & 1     & 4 \\
    1/2   & 1     & 1     & 2 \\
    1     & 1     & 1     & 1 \\
    1/4   & 1/2   & 1     & 1 \\
\end{array}
\right].
\]
$\hat{\mathbf{A}}$ can be obtained from $\mathbf{A}$ through an $\alpha$-transformation on the triad $(1,2,4)$ by $\alpha = 2$ as $\hat{a}_{1,2} = 2 a_{1,2}$, $\hat{a}_{1,4} = a_{1,4} / 2$, and $\hat{a}_{2,4} = 2 a_{2,4}$. The corresponding weight vectors are
\[
\begin{array}{ccccc}
\mathbf{w}^{EM}(\mathbf{A}) & \approx & \left[
\begin{array}{cccc}
    0.4269 & 0.2182 & 0.2182 & 0.1367 \\
\end{array}
\right]^\top & \neq & \\
& \neq & \left[
\begin{array}{cccc}
    0.3941 & 0.2256 & 0.2389 & 0.1413 \\
\end{array}
\right]^\top & \approx & \mathbf{w}^{EM}(\hat{\mathbf{A}}),
\end{array}
\]
which shows the violation of the axiom $IT$.
\end{proof}

\begin{proposition} \label{Prop33}
The Logarithmic Least Squares Method satisfies invariance to $\alpha$-transformation on a triad.
\end{proposition}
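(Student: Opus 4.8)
The plan is to work directly from the closed-form expression \eqref{Form_LLSM} rather than from the optimization problem \eqref{Eq_LLSM}, since the explicit formula reduces the claim to a short multiplicative bookkeeping argument. First I would introduce the abbreviation $g_\ell(\mathbf{A}) = \prod_{m=1}^n a_{\ell,m}^{1/n}$ for the (unnormalised) row geometric mean, so that $w_\ell^{LLSM}(\mathbf{A}) = g_\ell(\mathbf{A}) / \sum_{p=1}^n g_p(\mathbf{A})$. Because the final weights are obtained merely by normalising the vector $\left( g_1(\mathbf{A}), \dots, g_n(\mathbf{A}) \right)$, it suffices to show that every $g_\ell$ is left unchanged by an $\alpha$-transformation on a triad $(i,j,k)$; the common normalising denominator then takes care of itself.

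Next I would examine, row by row, which entries are altered by the transformation and by what factor. The key observation is that each of the three rows touched by the transformation is modified in exactly two of its entries, once by a factor $\alpha$ and once by a factor $1/\alpha$. Concretely, according to Definition~\ref{Def31}, in row $i$ the entry $a_{i,j}$ is multiplied by $\alpha$ while $a_{i,k}$ is divided by $\alpha$; in row $j$ the entry $a_{j,k}$ is multiplied by $\alpha$ while $a_{j,i}$ is divided by $\alpha$; and in row $k$ the entry $a_{k,i}$ is multiplied by $\alpha$ while $a_{k,j}$ is divided by $\alpha$. Hence the product of the entries in each of these three rows is scaled by $\alpha \cdot (1/\alpha) = 1$, so $g_i(\hat{\mathbf{A}}) = g_i(\mathbf{A})$, $g_j(\hat{\mathbf{A}}) = g_j(\mathbf{A})$, and $g_k(\hat{\mathbf{A}}) = g_k(\mathbf{A})$. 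Every other row is untouched, so that $g_\ell(\hat{\mathbf{A}}) = g_\ell(\mathbf{A})$ holds for all $1 \leq \ell \leq n$.

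Finally I would assemble these equalities: since the whole vector of row geometric means is invariant, so is its normalisation, which yields $w_\ell^{LLSM}(\hat{\mathbf{A}}) = w_\ell^{LLSM}(\mathbf{A})$ for every $\ell$, that is, $f(\mathbf{A}) = f(\hat{\mathbf{A}})$ for $f = \mathbf{w}^{LLSM}$. I do not anticipate a genuine obstacle here: the reciprocal structure of the $\alpha$-transformation along a $3$-cycle is precisely what makes the $\alpha$ and $1/\alpha$ factors cancel within each affected row, and the geometric-mean form of $LLSM$ converts this cancellation directly into invariance of the weights. The only point deserving care is to confirm that the transformation changes exactly two entries in each of the three rows and none elsewhere, which is immediate from the definition of the transformation.
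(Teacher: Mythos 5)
Your proposal is correct and follows essentially the same route as the paper, which likewise argues from the closed form \eqref{Form_LLSM} that the product of entries in each row is unchanged by the $\alpha$-transformation (the affected rows $i$, $j$, $k$ each gain one factor $\alpha$ and one factor $1/\alpha$), so the normalised row geometric means coincide. You merely spell out the row-by-row bookkeeping that the paper compresses into a single sentence.
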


\begin{proof}
Take two pairwise comparison matrices $\mathbf{A}, \hat{\mathbf{A}} \in \mathcal{A}^{n \times n}$ such that $\hat{\mathbf{A}}$ is obtained from $\mathbf{A}$ through an $\alpha$-transformation on a triad, namely, they are identical except for $\hat{a}_{i,j} = \alpha a_{i,j}$ ($\hat{a}_{j,i} = a_{j,i} / \alpha$), $\hat{a}_{j,k} = \alpha a_{j,k}$ ($\hat{a}_{k,j} = a_{k,j} / \alpha$), and $\hat{a}_{k,i} = \alpha a_{k,i}$ ($\hat{a}_{i,k} = a_{i,k} / \alpha$).
The product of row elements does not change, so $\mathbf{w}^{LLSM}(\mathbf{A}) = \mathbf{w}^{LLSM} ( \hat{\mathbf{A}} )$ according to \eqref{Form_LLSM}.
\end{proof}

\begin{corollary} \label{Cor31}
The counterexample concerning the Eigenvector Method and $IT$ in Proposition~\ref{Prop32} is minimal with respect to the number of alternatives as in the case of $n=3$, $EM$ and $LLSM$ yield the same result \citep{CrawfordWilliams1985}.
\end{corollary}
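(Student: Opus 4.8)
The plan is to show directly that, when $n = 3$, the row geometric mean vector given by \eqref{Form_LLSM} is exactly the principal eigenvector of $\mathbf{A}$; since both $\mathbf{w}^{EM}(\mathbf{A})$ and $\mathbf{w}^{LLSM}(\mathbf{A})$ are normalised to sum to one, proportionality of the two vectors is enough to conclude their equality. First I would write a generic $3 \times 3$ pairwise comparison matrix through its above-diagonal entries $a = a_{1,2}$, $b = a_{1,3}$, $c = a_{2,3}$, so that
\[
\mathbf{A} = \left[ \begin{array}{ccc} 1 & a & b \\ 1/a & 1 & c \\ 1/b & 1/c & 1 \end{array} \right],
\]
and read off from \eqref{Form_LLSM} the unnormalised weights $w_1 = (ab)^{1/3}$, $w_2 = (c/a)^{1/3}$, $w_3 = (bc)^{-1/3}$. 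Because eigenvectors are defined only up to scaling, working with this unnormalised $\mathbf{w}$ costs nothing.

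The core of the argument is a componentwise computation of $\mathbf{A}\mathbf{w}$. I would expand each of the three entries of $\mathbf{A}\mathbf{w}$, collecting the fractional exponents of $a$, $b$, and $c$, then divide the $i$-th entry by $w_i$ and check that all three quotients reduce to the same value. Setting $t = (ac/b)^{1/3}$, the claim is that
\[
\frac{(\mathbf{A}\mathbf{w})_i}{w_i} = 1 + t + \frac{1}{t} \qquad \text{for } i = 1, 2, 3,
\]
so that $\mathbf{w}$ is an eigenvector of $\mathbf{A}$ with eigenvalue $\lambda = 1 + t + 1/t$. As a sanity check, $t > 0$ gives $t + 1/t \geq 2$ by the arithmetic--geometric mean inequality, hence $\lambda \geq 3 = n$, with equality precisely when $ac = b$, that is, when $\mathbf{A}$ is consistent; this agrees with the known bound $\lambda_{\max} \geq n$.

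It then remains to argue that this eigenvector is the \emph{principal} one. Since $\mathbf{A} \in \mathcal{A}^{3 \times 3}$ has strictly positive entries, the Perron--Frobenius theorem guarantees that $\mathbf{A}$ admits, up to a positive scalar multiple, a unique strictly positive eigenvector, and that this eigenvector belongs to $\lambda_{\max}$. As $\mathbf{w}$ is strictly positive, it must be this Perron eigenvector and $\lambda = \lambda_{\max}$; normalising so that the entries sum to one then yields $\mathbf{w}^{EM}(\mathbf{A}) = \mathbf{w}^{LLSM}(\mathbf{A})$. Together with Proposition~\ref{Prop32}, which exhibits a discrepancy at $n = 4$, this establishes the asserted minimality. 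The only genuinely delicate point is the appeal to Perron--Frobenius: the row computation by itself shows merely that $\mathbf{w}$ is \emph{an} eigenvector, and without the uniqueness of the positive eigenvector one could not conclude that it is the one selected by the Eigenvector Method. The algebra in the middle step, though it demands careful tracking of the cube-root exponents, is otherwise entirely routine.
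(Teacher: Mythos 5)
Your proposal is correct, and the algebra checks out: with the unnormalised weights $w_1=(ab)^{1/3}$, $w_2=(c/a)^{1/3}$, $w_3=(bc)^{-1/3}$ one indeed finds $(\mathbf{A}\mathbf{w})_i/w_i = 1+t+1/t$ for all three rows, where $t=(ac/b)^{1/3}$, and your appeal to Perron--Frobenius is exactly what is needed to upgrade ``an eigenvector'' to ``the principal eigenvector'': a matrix with strictly positive entries has, up to positive scaling, a unique positive eigenvector, and it belongs to $\lambda_{\max}$. Note, however, that the paper itself offers no proof of this corollary at all --- it simply cites \citet{CrawfordWilliams1985} for the coincidence of $EM$ and $LLSM$ at $n=3$ --- so your argument is a self-contained reconstruction of the cited fact rather than a divergence from the paper's method; what it buys is independence from the reference, together with the pleasant by-product that $\lambda_{\max}=1+t+1/t\geq 3$ with equality precisely at consistency, recovering Saaty's bound. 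One point you leave implicit and should spell out to complete the minimality claim: the identity $EM=LLSM$ at $n=3$ excludes an $n=3$ counterexample only in combination with Proposition~\ref{Prop33} (since $LLSM$ satisfies $IT$, so does $EM$ whenever the two coincide), and for $n<3$ the axiom $IT$ is vacuous because no triad exists; with these two one-line observations added, your proof of the asserted minimality is complete.
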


\section{Characterization of the Logarithmic Least Squares Method} \label{Sec4}

\begin{theorem} \label{Theo41}
The Logarithmic Least Squares Method is the unique weighting method satisfying correctness and invariance to $\alpha$-transformation on a triad.
\end{theorem}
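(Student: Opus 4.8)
The plan is to split the statement into an existence part and a uniqueness part. Existence is already secured: Proposition~\ref{Prop31} and Proposition~\ref{Prop33} together show that $LLSM$ satisfies both $CO$ and $IT$. What remains is uniqueness, i.e.\ that any weighting method $f \colon \mathcal{A}^{n \times n} \to \mathcal{R}^n$ satisfying both axioms must agree with $LLSM$ on every matrix $\mathbf{A} \in \mathcal{A}^{n \times n}$.

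My strategy for uniqueness is to reduce an arbitrary $\mathbf{A}$ to a consistent matrix by $\alpha$-transformations and then appeal to $CO$. Concretely, fix $\mathbf{A}$, set $\mathbf{w} = \mathbf{w}^{LLSM}(\mathbf{A})$, and let $\mathbf{B} \in \mathcal{A}^{n \times n}$ be the consistent matrix induced by $\mathbf{w}$, that is, $b_{i,j} = w_i / w_j$. If I can show that $\mathbf{B}$ is reachable from $\mathbf{A}$ through a finite sequence of $\alpha$-transformations on triads, the argument closes at once: chaining $IT$ along the sequence gives $f(\mathbf{A}) = f(\mathbf{B})$, whereas $CO$ forces $f(\mathbf{B})$ to reproduce the inducing vector of the consistent matrix $\mathbf{B}$, so $f(\mathbf{B}) = \mathbf{w}$ (both are normalised weight vectors with identical ratios). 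Hence $f(\mathbf{A}) = \mathbf{w} = \mathbf{w}^{LLSM}(\mathbf{A})$.

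Everything thus hinges on the reachability claim, which I would establish in logarithmic coordinates. Write $x_{i,j} = \log a_{i,j}$ and $y_{i,j} = \log b_{i,j} = \log w_i - \log w_j$, viewed as antisymmetric arrays over the ordered pairs. An $\alpha$-transformation on the triad $(i,j,k)$ adds the constant $\log \alpha$ to the three entries forming the directed $3$-cycle $i \to j \to k \to i$; since $\log \alpha$ ranges over all of $\mathbb{R}$ and transformations may be composed, the $IT$-orbit of $x$ is exactly $x$ plus the linear span of all directed $3$-cycles. The crucial observation is that $y$ is nothing but the least-squares projection of $x$ onto the ``gradient'' subspace $\{ \log w_i - \log w_j \}$, because the defining problem \eqref{Eq_LLSM} of $LLSM$ is precisely the minimisation of $\sum_{i,j} \left[ x_{i,j} - ( \log w_i - \log w_j ) \right]^2$. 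By the projection theorem the residual $x - y$ then lies in the orthogonal complement of the gradient subspace.

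The heart of the matter -- and the step I expect to be the main obstacle -- is to verify that this orthogonal complement coincides with the span of the directed $3$-cycles, so that $x - y$ genuinely admits a finite expansion $\sum_T ( \log \alpha_T ) \chi_T$ and the required $\alpha$-transformations exist. This is a statement about the complete graph $K_n$, namely that its $3$-cycles generate the whole cycle space. I would prove it by exhibiting the explicit ``star'' family of triads $(1,i,j)$ with $2 \le i < j \le n$, checking that an arbitrary directed $3$-cycle decomposes as $\chi_{(i,j,k)} = \chi_{(1,i,j)} + \chi_{(1,j,k)} + \chi_{(1,k,i)}$, and then matching dimensions: the gradient subspace has dimension $n-1$, its complement has dimension $\binom{n}{2} - (n-1) = \binom{n-1}{2}$, and the $\binom{n-1}{2}$ star triads are independent, hence span that complement exactly. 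Once this span identity is in hand, solving the resulting linear system produces coefficients $\log \alpha_T$, each $\alpha_T = \exp( \log \alpha_T ) > 0$ is admissible, every intermediate matrix remains in $\mathcal{A}^{n \times n}$, and the chain of $IT$-equalities carries $\mathbf{A}$ to $\mathbf{B}$, completing the reduction and thereby the characterisation.
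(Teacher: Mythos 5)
Your proposal is correct, and its uniqueness argument takes a genuinely different route from the paper's, even though both share the same skeleton (reach a consistent matrix by $\alpha$-transformations, chain $IT$, finish with $CO$). The paper proves reachability constructively: it runs through the triads containing alternative $1$ in the explicit order $(n-1,n), (n-2,n), (n-2,n-1), \dots, (2,3)$, at each step choosing $\alpha$ adaptively (e.g.\ $\alpha_{i,j-1} = P_i / ( P_{j-1} a_{i,j-1}^{(i,j)} )$, where $P_i$ is the geometric mean of row $i$) so as to force one entry at a time to the value $P_i/P_j$, and then verifies consistency of the terminal matrix by a direct product computation using $P_1 = 1/\prod_{m=2}^n P_m$. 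You replace this bookkeeping with a structural argument: the $LLSM$ log-ratios are the orthogonal projection of $\log \mathbf{A}$ onto the gradient subspace (immediate from \eqref{Eq_LLSM}), so the residual lies in its orthogonal complement, and your star triads $(1,i,j)$ --- which are exactly the triads the paper uses --- form a basis of that complement by the private-edge independence argument together with the count $\binom{n}{2}-(n-1)=\binom{n-1}{2}$, matching the paper's bound of $(n-1)(n-2)/2$ steps. In effect, the paper's adaptive algorithm is a triangular solve, done by hand, of the linear system you write down all at once; since your transformations add vectors in log coordinates, order of application is irrelevant for you, whereas the paper's $\alpha$'s depend on the intermediate matrices. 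Your route buys an explanation of \emph{why} the reduction must succeed (projection theorem plus the fact that triangles through a fixed vertex span the cycle space of $K_n$) and needs only the optimisation definition of $LLSM$, not the closed form \eqref{Form_LLSM}; the paper's route buys a fully explicit, elementary construction with concrete intermediate matrices, as illustrated in Example~\ref{Examp41}. Two cosmetic remarks: you only need the inclusion that every point of $\log \mathbf{A}$ plus the span of directed $3$-cycles is reachable, not the exact orbit description; and the double counting of ordered pairs in the objective rescales the inner product by $2$, which leaves the projection unchanged.
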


\begin{proof}
$LLSM$ satisfies both axioms according to Propositions~\ref{Prop31} and \ref{Prop33}.

For uniqueness, consider an arbitrary pairwise comparison matrix $\mathbf{A} \in \mathcal{A}^{n \times n}$ and a weighting method $f: \mathcal{A}^{n \times n} \to \mathbb{R}^n$, which meets correctness and invariance to $\alpha$-transformation on a triad.
Denote by $P_i = \sqrt[n]{\prod_{k=1}^n a_{i,k}}$ the geometric mean of row elements for alternative $i$. In order to prove that $f$ is equivalent to the Logarithmic Least Squares Method, it is enough to show that $f_i(\mathbf{A}) / f_j(\mathbf{A}) = P_i / P_j$.

The proof will present that every pairwise comparison matrix can be transformed into a consistent matrix by a sequence of $\alpha$-transformations on a triad carried out over an appropriately chosen set of triads: the triads to be modified always contain the first alternative, while the two others are considered according to the sequence $(n-1,n)$,$(n-2,n)$,$(n-2,n-1)$,$(n-3,n)$,$(n-3,n-1)$,\dots , $(2,n)$,$(2,n-1)$,\dots ,$(2,3)$.
Since the Logarithmic Least Squares Method satisfies invariance to $\alpha$-transformation on a triad, each matrix of the sequence shares the same geometric mean weight vector.
The procedure will stop after (maximally) $(n-1)(n-2)/2$ steps, which is the number of triads not containing the first alternative.

Let us introduce the pairwise comparison matrix $\mathbf{A}^{(n-1,n)} \in \mathcal{A}^{n \times n}$ such that $a_{1,n-1}^{(n-1,n)}:= \alpha_{n-1,n} a_{n-1,n}$; $a_{1,n}^{(n-1,n)}:= a_{1,n} / \alpha_{n-1,n}$; $a_{n-1,n}^{(n-1,n)}:= \alpha_{n-1,n} a_{1,n}$\footnote{~For the sake of simplicity, only the elements above the diagonal are indicated.}
and $a_{i,j}:= a_{i,j}^{(n-1,n)}$ for all other elements, where $\alpha_{n-1,n} = P_{n-1} / \left( P_n a_{n-1,n} \right)$.
Since $\mathbf{A}^{(n-1,n)}$ is obtained from $\mathbf{A}$ through an $\alpha$-transformation on a triad, $f(\mathbf{A}) = f \left( \mathbf{A}^{(n-1,n)} \right)$ according to the assumption that $f$ satisfies the axiom $IT$.
If $n=3$, $\mathbf{A}^{(n-1,n)}$ is consistent because $a_{1,2}^{(2,3)} = P_1 / P_2$, $a_{1,3}^{(2,3)} = P_1 / P_3$, and $a_{2,3}^{(2,3)} = P_2 / P_3$, therefore correctness implies $f(\mathbf{A}) = \mathbf{w}^{LLSM}(\mathbf{A})$.

Otherwise, analogous $\alpha$-transformations on a triad can be implemented until we get $\mathbf{A}^{(i,j)} \in \mathcal{A}^{n \times n}$, where $1 < i < j$ and $a_{k,\ell}^{(i,j)} = P_k / P_\ell$ for all $i \leq k < \ell$. The next step of the algorithm depends on the difference of $i$ and $j$.
\begin{itemize}
\item
If $j > i+1$, then introduce the pairwise comparison matrix $\mathbf{A}^{(i,j-1)} \in \mathcal{A}^{n \times n}$ such that $a_{1,i}^{(i,j-1)}:= \alpha_{i,j-1} a_{1,i}^{(i,j)}$; $a_{1,j}^{(i,j-1)}:= a_{1,j}^{(i,j)} / \alpha_{i,j-1}$; $a_{i,j-1}^{(i,j-1)}:= \alpha_{i,j-1} a_{i,j-1}^{(i,j)}$ and $a_{k,\ell}^{(i,j-1)}:= a_{k,\ell}^{(i,j)}$ for all other elements, where $\alpha_{i,j-1} = P_{i} / \left( P_{j-1} a_{i,j-1}^{(i,j)} \right)$.
It can be checked that $a_{k,\ell}^{(i,j-1)} = a_{k,\ell}^{(i,j)} = P_k / P_\ell$ for all $i \leq k < \ell$, while $a_{i,j-1}^{(i,j-1)} = P_{i} / P_{j-1}$.
\item
If $j = i+1$ and $i > 2$, then define the pairwise comparison matrix $\mathbf{A}^{(i-1,n)} \in \mathcal{A}^{n \times n}$ such that $a_{1,i-1}^{(i-1,n)}:= \alpha_{i-1,n} a_{1,i-1}^{(i,n)}$; $a_{1,n}^{(i-1,n)}:= a_{1,n}^{(i,n)} / \alpha_{i-1,n}$; $a_{i-1,n}^{(i-1,n)}:= \alpha_{i-1,n} a_{i-1,n}^{(i,n)}$ and $a_{k,\ell}^{(i,j-1)}:= a_{k,\ell}^{(i,j)}$ for all other elements, where $\alpha_{i-1,n} = P_{i-1} / \left( P_n a_{i-1,n}^{(i,n)} \right)$.
It can be checked that $a_{k,\ell}^{(i-1,n)} = a_{k,\ell}^{(i,n)} = P_k / P_\ell$ for all $i \leq k < \ell$, while $a_{i-1,n}^{(i-1,n)} = P_{i-1} / P_n$.
\end{itemize}
Finally, $\mathbf{A}^{(2,3)} \in \mathcal{A}^{n \times n}$ is obtained such that $a_{k,\ell}^{(2,3)} = P_k / P_\ell$ for all $2 \leq k < \ell$. Furthermore,
\[
a_{1,j}^{(2,3)} = a_{1,j} \frac{\prod_{m=j+1}^{n} \alpha_{j,m}}{\prod_{m=2}^{j-1} \alpha_{m,j}} = a_{1,j} \left( \prod_{m=j+1}^{n} \frac{P_j}{P_m} \frac{1}{a_{j,m}} \right) \left( \prod_{m=2}^{j-1} \frac{P_j}{P_m} a_{m,j} \right).\footnote{~Note that $\prod_{m=2}^{j-1} \alpha_{m,j} = 1$ if $j=2$ and $\prod_{m=j+1}^{n} \alpha_{m,j} = 1$ if $j=n$.}
\]
However, $a_{m,j} = 1 / a_{j,m}$ due to the reciprocity condition and $\prod_{m=1}^n a_{j,m} = P_j^n$, therefore
\[
a_{1,j}^{(2,3)} = \frac{P_j^{n-2}}{\prod_{m=j+1}^{n} P_m \prod_{m=2}^{j-1} P_m} \frac{1}{P_j^n} = \frac{1}{P_j} \frac{1}{\prod_{m=2}^{n} P_m}.
\]
It is clear that $P_1 = 1 / \left( \prod_{m=2}^{n} P_m \right)$ as the product of all elements of $\mathbf{A}$ gives one, which leads to
\[
a_{1,j}^{(2,3)} = \frac{P_1}{P_j}
\]
for all $j \geq 2$. In other words, $\mathbf{A}^{(2,3)} \in \mathcal{A}^{n \times n}$ is a consistent pairwise comparison matrix such that $a_{i,j}^{(2,3)} = P_i / P_j = w_i^{LLSM} \left( \mathbf{A}^{(2,3)} \right) / w_j^{LLSM} \left( \mathbf{A}^{(2,3)} \right)$ for all $1 \leq i,j \leq n$.
Consequently, $f \left( \mathbf{A}^{(2,3)} \right) = \mathbf{w}^{LLSM} \left( \mathbf{A}^{(2,3)} \right)$ due to correctness.
Weighting method $f$ is invariant to $\alpha$-transformation on a triad, hence
\[
f \left( \mathbf{A}^{(2,3)} \right) = f \left( \mathbf{A}^{(2,4)} \right) = \dots = f \left( \mathbf{A}^{(n-1,n)} \right) = f \left( \mathbf{A} \right).
\]
The Logarithmic Least Squares Method also satisfies $IT$ according to Proposition~\ref{Prop33}, verifying the claim that $f \left( \mathbf{A} \right) = \mathbf{w}^{LLSM} \left( \mathbf{A} \right)$.
\end{proof}

\begin{example} \label{Examp41}
As an illustration of the proof of Theorem~\ref{Theo41}, consider the following pairwise comparison matrix:
\[
\mathbf{A} = \left[
\begin{array}{K{2em} K{2em} K{2em} K{2em}}
    1     & 1     & 1     & 16 \\
    1     & 1     & 1     & 1 \\
    1     & 1     & 1     & 1 \\
    1/16  & 1     & 1     & 1 \\
\end{array}
\right] \text{, which leads to }
\mathbf{w}^{LLSM}(\mathbf{A}) = \frac{1}{9} \left[
\begin{array}{c}
    4 \\
    2 \\
    2 \\
    1 \\
\end{array}
\right].
\]
Since $a_{3,4} \neq w^{LLSM}_3(\mathbf{A}) / w^{LLSM}_4(\mathbf{A})$, an $\alpha$-transformation on the triad $(1,3,4)$ should be carried out by $\alpha_{3,4} = \left[ w^{LLSM}_3(\mathbf{A}) / w^{LLSM}_4(\mathbf{A}) \right] / a_{3,4} = 2$, which results in the matrix $\mathbf{A}^{(3,4)}$.
After that, another $\alpha$-transformation on the triad $(1,2,4)$ is necessary by $\alpha_{2,4} = \left[ w^{LLSM}_3(\mathbf{A}) / w^{LLSM}_4(\mathbf{A}) \right] / a_{2,4}^{(3,4)} = 2$ in order to get the matrix $\mathbf{A}^{(2,4)}$:
\[
\mathbf{A}^{(3,4)} = \left[
\begin{array}{K{1.5em} K{1.5em} K{1.5em} K{1.5em}}
    1     & 1     & 2     & 8 \\
    1     & 1     & 1     & 1 \\
    1/2   & 1     & 1     & 2 \\
    1/8	  & 1     & 1/2   & 1 \\
\end{array}
\right] \qquad \text{and} \qquad
\mathbf{A}^{(2,4)} = \left[
\begin{array}{K{1.5em} K{1.5em} K{1.5em} K{1.5em}}
    1     & 2     & 2     & 4 \\
    1/2   & 1     & 1     & 2 \\
    1/2   & 1     & 1     & 2 \\
    1/4	  & 1/2   & 1/2   & 1 \\
\end{array}
\right] = \mathbf{A}^{(2,3)}.
\]
Finally, an $\alpha$-transformation on the triad $(1,2,3)$ should be carried out by $\alpha_{2,3} = \left[ w^{LLSM}_2(\mathbf{A}) / w^{LLSM}_3(\mathbf{A}) \right] / a_{2,3}^{(3,4)} = 1$, so the pairwise comparison matrix remains unchanged, $\mathbf{A}^{(2,3)} = \mathbf{A}^{(2,4)}$. It is a consistent matrix, therefore any weighting method satisfying correctness and invariance to $\alpha$-transformation on a triad  should give $\mathbf{w}^{LLSM}(\mathbf{A})$ as the weight vector associated with the pairwise comparison matrix $\mathbf{A}$.
\end{example}

\begin{proposition} \label{Prop61}
$CO$ and $IT$ are logically independent axioms.
\end{proposition}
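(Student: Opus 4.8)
The plan is to prove logical independence in the usual way, namely by exhibiting two weighting methods, one satisfying $CO$ but violating $IT$, and one satisfying $IT$ but violating $CO$. Producing both witnesses shows that neither axiom can be derived from the other, which is exactly what logical independence requires.

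For the claim that $IT$ does not imply $CO$, I would use the constant (uniform) weighting method $f^{\ast}$ given by $f^{\ast}_i(\mathbf{A}) = 1/n$ for every $\mathbf{A} \in \mathcal{A}^{n \times n}$ and every index $i$. This is a genuine weighting method, since its value always lies in $\mathcal{R}^n$. It satisfies $IT$ trivially: as $f^{\ast}$ is constant in $\mathbf{A}$, the equality $f^{\ast}(\mathbf{A}) = f^{\ast}(\hat{\mathbf{A}})$ holds for any two matrices related by an $\alpha$-transformation on a triad. On the other hand, it violates $CO$, because for any consistent matrix induced by a non-uniform vector (for instance the one with $a_{1,2} = 2$ and the remaining ratios chosen to preserve consistency) one has $f^{\ast}_1(\mathbf{A}) / f^{\ast}_2(\mathbf{A}) = 1 \neq 2 = a_{1,2}$.

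For the converse claim that $CO$ does not imply $IT$, I would simply invoke the Eigenvector Method. Proposition~\ref{Prop31} already guarantees that $EM$ satisfies $CO$, while Proposition~\ref{Prop32} supplies an explicit $4 \times 4$ counterexample showing that $EM$ violates $IT$. Thus $EM$ serves directly as the required second witness, and no new construction is needed.

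Combining the two, we obtain $IT \not\Rightarrow CO$ (via $f^{\ast}$) and $CO \not\Rightarrow IT$ (via $EM$), which is precisely logical independence. I do not anticipate any genuine obstacle here: the only points requiring (routine) verification are that $f^{\ast}$ maps into $\mathcal{R}^n$ and that some consistent pairwise comparison matrix has a non-uniform priority ratio, both of which are immediate. The minor subtlety, if any, is purely expository, namely recycling the already-established Propositions~\ref{Prop31} and~\ref{Prop32} rather than re-deriving a counterexample for the $CO$-but-not-$IT$ direction.
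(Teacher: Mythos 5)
Your proposal is correct and matches the paper's own proof exactly: both use the Eigenvector Method (via Propositions~\ref{Prop31} and~\ref{Prop32}) as the witness satisfying $CO$ but not $IT$, and the flat method $f_i(\mathbf{A}) = 1/n$ as the witness satisfying $IT$ but not $CO$. Your added verification that the flat method violates $CO$ on a consistent matrix with a non-uniform inducing vector is a routine detail the paper leaves implicit.
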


\begin{proof}
It is shown that there exist weighting methods, which satisfy one axiom, but do not meet the other:
\begin{enumerate}[label=\fbox{\arabic*}]
\item
$CO$: the Eigenvector Method (see Propositions~\ref{Prop31} and \ref{Prop32});
\item
$IT$: the flat method such that $f_i(\mathbf{A}) = 1/n$ for all $1 \leq i \leq n$.
\end{enumerate}
\end{proof}

\section{Conclusions} \label{Sec5}

We have proved $LLSM$ to be the \emph{unique} weighting method among the procedures used to derive priorities from reciprocal pairwise comparison matrices, which is correct in the consistent case and invariant to a specific transformation on a triad. The somewhat surprising fact is that our algorithm aims only to recover local consistency by focusing on a given triad without the consideration of other elements of the pairwise comparison matrix. Hence satisfaction of a local property fully determines a global weight vector.

Naturally, one can debate whether the axiom invariance to $\alpha$-transformation on a triad should be accepted, but, at least, it reveals an important aspect of the geometric mean, contributing to the long list of its favourable theoretical properties \citep{BarzilaiCookGolany1987, Barzilai1997, Dijkstra2013, CaklovicKurdija2017, LundySirajGreco2017, Csato2018d}.
Furthermore, the violation of this property can be an argument against the Eigenvector Method, a procedure having several other disadvantages, for example, the Pareto inefficiency of  the weight vector \citep{BlanqueroCarrizosaConde2006, Bozoki2014, BozokiFulop2018}, or the possibility of strong rank reversal in group decision-making \citep{PerezMokotoff2016, Csato2017b}.

Some directions for future research are also worth mentioning.
First, further axiomatic analysis and characterizations of weighting methods can help in a better understanding of them.
Second, $\alpha$-transformation on a triad seems to be related to inconsistency reduction processes in pairwise comparison matrices \citep{KoczkodajSzybowski2016, Szybowski2018}.
Third, the Logarithmic Least Squares Methods has been extended to the incomplete case when certain elements of the pairwise comparison matrix are unknown \citep{BozokiFulopRonyai2010}. Axiomatization on this more general domain seems to be promising and within reach, as revealed by \citet{BozokiTsyganok2017}, although $LLSM$ sometimes behaves strangely on this general domain \citep{CsatoRonyai2016}.

\section*{Acknowledgements}
\addcontentsline{toc}{section}{Acknowledgements}
\noindent
We would like to thank \emph{Denis Bouyssou} for inspiration and \emph{S\'andor Boz\'oki} for plentiful advice. \\
Three anonymous reviewers provided beneficial remarks and suggestions. \\
We are grateful to the audience of our talk at Corvinus Game Theory Seminar on 17 March 2017 for useful comments concerning the proof of Theorem~\ref{Theo41}. \\
The research was supported by OTKA grant K 111797 and by the MTA Premium Post Doctorate Research Program.

\bibliographystyle{apalike}
\bibliography{All_references}

\end{document}